\documentclass[12pt]{article}
\usepackage{srcltx}
\usepackage{eurosym}
\usepackage{mathtools}
\usepackage{amsmath}
\usepackage{amsfonts}
\usepackage{amssymb}
\usepackage{amsthm}
\usepackage{graphicx}
\usepackage{mathrsfs}

\usepackage{exscale}
\usepackage{latexsym}
\usepackage{authblk}

\usepackage{bbm}

\usepackage{xcolor}
\usepackage{tocloft}

\usepackage[colorlinks,plainpages=true,pdfpagelabels,hypertexnames=true,colorlinks=true,pdfstartview=FitV,linkcolor=blue,citecolor=red,urlcolor=black]{hyperref}
\PassOptionsToPackage{unicode}{hyperref}
\PassOptionsToPackage{naturalnames}{hyperref}
\usepackage{enumerate}
\usepackage[shortlabels]{enumitem}
\usepackage{bookmark}
\usepackage{wasysym}
\usepackage{esint}
\usepackage[ddmmyyyy]{datetime}
\usepackage[margin=1in]{geometry}
\numberwithin{equation}{section}
\everymath{\displaystyle}
\usepackage[capitalize,nameinlink]{cleveref}
\crefname{section}{Section}{Sections}
\crefname{subsection}{Subsection}{Subsections}
\crefname{condition}{Condition}{Conditions}
\crefname{hypothesis}{Hypothesis}{Conditions}
\crefname{assumption}{Assumption}{Assumptions}
\crefname{lemma}{Lemma}{Lemmas}
\crefname{definition}{Definition}{Definitions}

\crefformat{equation}{\textup{#2(#1)#3}}
\crefrangeformat{equation}{\textup{#3(#1)#4--#5(#2)#6}}
\crefmultiformat{equation}{\textup{#2(#1)#3}}{ and \textup{#2(#1)#3}}
{, \textup{#2(#1)#3}}{, and \textup{#2(#1)#3}}
\crefrangemultiformat{equation}{\textup{#3(#1)#4--#5(#2)#6}}%
{ and \textup{#3(#1)#4--#5(#2)#6}}{, \textup{#3(#1)#4--#5(#2)#6}}%
{, and \textup{#3(#1)#4--#5(#2)#6}}

\Crefformat{equation}{#2Equation~\textup{(#1)}#3}
\Crefrangeformat{equation}{Equations~\textup{#3(#1)#4--#5(#2)#6}}
\Crefmultiformat{equation}{Equations~\textup{#2(#1)#3}}{ and \textup{#2(#1)#3}}
{, \textup{#2(#1)#3}}{, and \textup{#2(#1)#3}}
\Crefrangemultiformat{equation}{Equations~\textup{#3(#1)#4--#5(#2)#6}}%
{ and \textup{#3(#1)#4--#5(#2)#6}}{, \textup{#3(#1)#4--#5(#2)#6}}%
{, and \textup{#3(#1)#4--#5(#2)#6}}

\crefdefaultlabelformat{#2\textup{#1}#3}

\newtheorem{theorem} {Theorem}[section]
\newtheorem{proposition} [theorem]{Proposition}

\newtheorem{corollary}[theorem]{Corollary}
\newtheorem{example}[theorem]{Example}
\newtheorem{question}[theorem]{Question}
\newtheorem{counter example}[theorem]{Counter Example}
\newtheorem{remark}[theorem] {Remark}

\def\CC{{\rm \kern.24em \vrule width.02em height1.4ex depth-.05ex \kern-.26emC}}

\def\TagOnRight

\def\AA{{it I} \hskip-3pt{\tt A}}

\def\QQ{\rlap {\raise 0.4ex \hbox{$\scriptscriptstyle |$}} {\hskip -0.1em Q}}

\makeatletter
\newcommand{\vo}{\vec{o}\@ifnextchar{^}{\,}{}}
\makeatother

\def\YYint#1#2#3{{\setbox0=\hbox{$#1{#2#3}{\iint}$}
		\vcenter{\hbox{$#2#3$}}\kern-.50\wd0}}

\def\XXint#1#2#3{{\setbox0=\hbox{$#1{#2#3}{\int}$}
		\vcenter{\hbox{$#2#3$}}\kern-.50\wd0}}

\makeatletter
\def\namedlabel#1#2{\begingroup
	\def\@currentlabel{#2}%
	\label{#1}\endgroup
}
\makeatother
\makeatletter
\newcommand{\rmh}[1]{\mathpalette{\raisem@th{#1}}}
\newcommand{\raisem@th}[3]{\hspace*{-1pt}\raisebox{#1}{$#2#3$}}
\makeatother

\newcounter{desccount}

\newcommand{\descref}[2]{\hyperref[#1]{\textnormal{\textcolor{black}{}\textcolor{blue}{ #2}\textcolor{black}{}}}}

\newcommand{\dref}[2]{\hyperref[#1]{\textcolor{black}{(}\textcolor{blue}{\bf #2}\textcolor{black}{)}}}
\newcommand{\be} {\begin{eqnarray}}
	\newcommand{\ee} {\end{eqnarray}}
\newcommand{\Bea} {\begin{eqnarray*}}
	\newcommand{\Eea} {\end{eqnarray*}}
\newcommand{\pa} {\partial}

\newcommand{\rr}{\rightarrow}

\newcommand{\B} {\beta}

\newcommand{\la} {\lambda}

\newcommand{\f}{\infty}
\newcommand{\R}{\mathbb{R}}

\newcommand{\noi} {\noindent}

\newcommand{\norm}[1]{\left|\hspace{-0.2mm}\left| #1 \right|\hspace{-0.2mm}\right|}
\newcommand{\abs}[1]{\left| #1\right|}

\newcounter{whitney}
\refstepcounter{whitney}

\newcounter{ineqcounter}
\refstepcounter{ineqcounter}
\makeatletter
\def\ps@pprintTitle{%
	\let\@oddhead\@empty
	\let\@evenhead\@empty
	\def\@oddfoot{}%
	\let\@evenfoot\@oddfoot}
\makeatother
\usepackage[titletoc,toc,page]{appendix}

\makeatletter
\newcommand{\refcheckize}[1]{%
	\expandafter\let\csname @@\string#1\endcsname#1%
	\expandafter\DeclareRobustCommand\csname relax\string#1\endcsname[1]{%
		\csname @@\string#1\endcsname{##1}\wrtusdrf{##1}}%
	\expandafter\let\expandafter#1\csname relax\string#1\endcsname
}
\makeatother

\refcheckize{\cref}
\refcheckize{\Cref}


\makeatletter
\newcommand{\mainsectionstyle}{%
	\renewcommand{\@secnumfont}{\bfseries}
	\renewcommand\section{\@startsection{section}{2}%
		\z@{.5\linespacing\@plus.7\linespacing}{-.5em}%
		{\normalfont\bfseries}}%
}
\makeatother
\usepackage{pgf,tikz}
\usetikzlibrary{arrows}
\usetikzlibrary{decorations.pathreplacing}

\usepackage{xpatch}
\xpatchcmd{\MaketitleBox}{\hrule}{}{}{}
\xpatchcmd{\MaketitleBox}{\hrule}{}{}{}

\date{}

\usepackage{scalerel}

\makeatletter

\title{}

\allowdisplaybreaks

\usepackage{accents}
\newlength{\dhatheight}

\title{A note on existence of smooth solution to Jacobian equation for compactly supported smooth data in $\mathbb{R}^2$}

\author[]{Animesh Jana}
\affil[]{\footnotesize Harish-Chandra Research Institute, A CI of Homi Bhabha National Institute,\\ Chhatnag Road, Jhunsi, Prayagraj 211019, India.}
\affil[]{\em \footnotesize	animeshjana@hri.res.in , ajana.math@gmail.com}

\begin{document}
	\maketitle
	\begin{abstract}
		In this note, we give an explicit construction of global solutions to the prescribed Jacobian equation 
		\[
		\det(\nabla u)=f\mbox{ in }\mathbb{R}^2,
		\]
		for a class of data. For every $f\in C_c^1(\mathbb{R}^2)$ and $p>1$, we construct a solution $u\in \dot W^{1,2p}(\mathbb{R}^2)\cap L^\infty(\mathbb{R}^2)$. In particular, no sign condition or integral constraint is imposed on the compactly supported data. For $f\in C_c^\infty(\mathbb{R}^2)$, the construction yields a smooth solution. We also consider rapidly decaying data and prove the existence of global solutions with bounded gradient for every $f\in\mathcal{S}(\mathbb{R}^2)$. Finally, we extend the construction to compactly supported data that are measurable in one variable and $C^1$ in the other. Our proof relies on a similar idea as in [Moser, Trans. Amer. Math. Soc. 1965].
	\end{abstract}	

\section{Introduction}
In this short note, we study the prescribed Jacobian equation
\begin{equation}\label{eq:intro-jacobian}
	\det(\nabla u)=f \qquad \text{in }\mathbb{R}^n, 
\end{equation}
with particular emphasis on the two-dimensional case \(n=2\). Here $f$ is given and it belongs to a suitable function space. We want to find the unknown $u:\mathbb{R}^n\to\mathbb{R}^n$ in a homogeneuous Sobolev spaces. Our main objective is to construct global solutions for compactly supported data without imposing either a sign condition on \(f\) or an integral constraint on \(f\).

The prescribed Jacobian equation has a long history and is closely connected with nonlinear analysis, geometric mapping theory, and compensated compactness. Our focus is for the problem in whole of $\R^n$, in particular $n=2$. This problem has been initiated by Coifman, Lions, Meyer and Semmes \cite{Coifman-et-al}. We define $\mathcal{J}u=\mbox{det}(\nabla u)$. It still remains an open problem in this topic:
\begin{question}\label{Q1}
	Is the Jacobian $\mathcal{J}: \dot{W}^{1,np}(\R^n,\R^n)\rr \mathscr{H}^p(\R^n)$ surjective?
\end{question}
Here $\mathscr{H}^p(\R^n)$ denotes the Hardy space in $\R^n$. It has been shown \cite{Coifman-et-al} that $\mathscr{H}^p(\R^n)$ is the smallest Banach space that includes the range of the Jacobian map $\mathcal{J}$. Moreover, if we replace the domain of $\mathcal{J}$ by inhomogeneous Sobolev space, then there exists counter examples (see \cite{LindbergARMA17}).

The existence of solutions to the Jacobian equation $\det(\nabla u) = f$ is well-studied, most notably through the work of Moser \cite{Moser65} and Dacorogna--Moser \cite{DacorognaMoser90}. However, classical methods generally require $f$ to satisfy an integral constraint to ensure $u$ is the identity at infinity. In this paper, we provide an entirely elementary construction that proves for any $f \in C_c^\infty(\mathbb{R}^2)$, there exists a smooth solution $u$. Our construction is explicit: $u_1$ remains constant at infinity while $u_2$ is supported within a strip. 

In a bounded, smooth domain $\Omega\subset\R^2$, the Jacobian equation is usually treated as a Dirichlet problem: finding $u: \Omega \to \R^2$ such that $\det(\nabla u )= f$ with $u(x) = x$ (or some other prescribed map) on $\partial \Omega$. In the seminal paper \cite{DacorognaMoser90}, Dacoronga and Moser studied the existence of solution for Jacobian equation in a domain when $f>0$. A necessary condition for existence is the total volume constraint. By the change of variables formula, one must have the following integral condition
\begin{equation*}
	\int_{\Omega} f(x) \, dx = \text{vol}(\Omega).	
\end{equation*} 
Dacorogna and Moser \cite{DacorognaMoser90} proved that if $f > 0$ and this integral condition holds, smooth solutions exist. Li-Schikorra \cite{LiSchikorra} obtained the existence result when $f$ belongs to fractional Sobolev space and $f>0$. See also \cite{CupiniDacorognaKneuss,Kneuss2012,Ye94} and references therein. Recently, Guerra, Koch, and Lindberg \cite{GuerraKochLindbergCVPDE21} showed that the in critical Sobolev spaces, even if $f$ is close to 1, a solution $u$ in the expected Sobolev class $W^{1,n}$ may not exist for a Baire-generic $f$. This highlights that the boundary constraint $u|_{\partial \Omega} = \text{id}$ is very restrictive when combined with the nonlinear determinant. For bounded domains, Guerra, Koch, and Lindberg \cite{GuerraKochLindbergARMA21} study the equation through the lens of energy minimization. They seek to find $u$ that minimizes the following functional 
\begin{equation*}
	\int_{\Omega} |\nabla u|^2\,dx\mbox{ subject to }\det(\nabla u)= f. 
\end{equation*}
For bounded domain, the regularity aspects of solution to the Jacobian equation has been studied in \cite{BrezisNguyen,Muller}

In $\mathbb{R}^n$, the focus shifts from boundary values to integrability, decay, and harmonic analysis structures. The landmark result by Coifman, Lions, Meyer, and Semmes \cite{Coifman-et-al} established that for any $u$ such that $\nabla u \in L^n(\mathbb{R}^n)$, the Jacobian $\det(\nabla u)$ lies in the Hardy space $\mathscr{H}^1(\mathbb{R}^n)$. Iwaniec \cite{Iwaniec} asked about the surjectivity of the Jacobian map. It has been shown in \cite{GuerraKochLindberg23} that the surjectivity is equivalent to the continuity of the inverse and also to the estimate $\norm{\nabla u}_{L^{np}(\R^n)}^n\lesssim \norm{\mathcal{J}u}_{\mathscr{H}^p(\R^n)}$. Hyt\"{o}nen \cite{Hytonen} and Lindberg \cite{LindbergJFAA23,LindbergThesis} focused on the surjective nature of the Jacobian in $\mathbb{R}^n$.  For radially symmetric data, partial answer to Question \ref{Q1} can be found here \cite{LindbergThesis}. Lindberg proved that the existence of radial stretching solutions for radial data $f$ satisfying the following condition,
\begin{equation*}
	\int_{B(0,r)}f(|x|)dx\mbox{ does not change sign for all }r>0.
\end{equation*}
Furthermore, in \cite{GuerraKochLindbergARMA21}, Guerra et al showed that for a large class of data, the uniqueness up to rotation can be characterized by $\la[f]$ which is defined as follows
\begin{equation*}
	\la[f]=\mbox{ess--}\hspace{-0.2cm}\sup\limits_{r\in[0,1]}\frac{|f(r)|}{\frac{1}{|B(0,r)|}\int_{B(0,r)}f(x)\,dx}.
\end{equation*}
They have also shown the existence and uniqueness of energy minimizer solution for radially symmetric data satisfying $\la[f]\leq 1$. Later, it has been shown for a larger class of data by Lindberg \cite{LindbergJFAA23}. The results presented in the current paper address a different question from those considered in these works. 

The purpose of this note is more elementary. We exploit the special structure of the determinant in two dimension to construct solutions explicitly. Our main result shows, in particular, that every smooth compactly supported function on \(\mathbb{R}^2\), with no restriction on its sign or total integral, can be realized as the Jacobian determinant of a globally defined bounded smooth map. More precisely, for every $ f\in C_c^\infty(\mathbb{R}^2)$ and every \(p>1\), we construct a map \(u:\mathbb{R}^2\to\mathbb{R}^2\) satisfying
\begin{equation*}
	\det(\nabla u)=f,
\end{equation*}
and having the Sobolev and boundedness properties (see Theorem \ref{theorem-1}). The construction is explicit and does not rely on a continuity or approximation argument.

The absence of an integral constraint is an essential feature of the construction. Indeed, if one additionally required \(u\) to have compact support, then under suitable regularity assumptions the integral of its Jacobian over \(\mathbb{R}^2\) would vanish. Thus arbitrary compactly supported data with nonzero integral cannot arise from compactly supported solutions. Our construction instead allows one component of the solution to approach a nonzero function at infinity, while the other component remains localized. This makes it possible to realize data with arbitrary total integral.

We also give a related construction for rapidly decaying data. In this case, a simple separation of variables allows one to construct a global solution with bounded gradient. Finally, we consider discontinuous data and give explicit examples for characteristic functions of convex sets. These examples illustrate that the basic mechanism is not restricted to smooth data.

In this short note, exploring the idea in Moser's construction \cite{Moser65}, we are going to show the existence of solution in $\R^2$ for smooth compact support data. 

\begin{theorem}\label{theorem-1}
	Let $f:\R^2\rr\R$ be a compactly supported function such that $x\mapsto f(x,y)$ is a bounded measurable for all $y\in\R$ and $y\mapsto f(x,y)$ is $C^1$ for all $x\in\R$. For each $p>1$, there exists a function $u\in \dot{W}^{1,2p}(\R^2)\cap L^\f(\R^2)$ such that
	\begin{equation}
		\det(\nabla u(x,y))= f(x,y) \mbox{ for a.e. }(x,y)\in \R^2.
	\end{equation}
	
\end{theorem}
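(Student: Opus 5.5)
The plan is to build $u=(u_1,u_2)$ in which $u_1$ is a smooth bounded function that does most of the "geometric" work, and $u_2$ is obtained by integrating $f$ in $x$. The regularity hypotheses fit this split: only measurability in $x$ is needed to integrate in $x$, while the $C^1$ dependence in $y$ is what makes $\partial_y u_2$ exist. The total integral of $f$ cannot be absorbed by a compactly supported $u_2$ with $\partial_x u_1\equiv 0$. We therefore get rid of the leftover mass to the right of $\operatorname{supp} f$ by making $u_2$ a function of $u_1$ there, which has zero Jacobian. The hypothesis $p>1$ enters only at the end: a bounded function with $|\nabla u_1|\lesssim (1+|(x,y)|)^{-1}$ has gradient in $L^{2p}(\R^2)$ exactly when $2p>2$.

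Concretely, fix $R$ with $\operatorname{supp} f\subset[-R,R]^2$. Choose $\psi\in C^\infty(\R)$ with $\psi'>0$ and $\psi(\R)\subset(-1,1)$. Choose $\chi\in C^\infty(\R)$ nondecreasing with $\chi=0$ on $(-\infty,R]$ and $\chi=3$ on $[R+1,\infty)$. On a large box $Q=[-R-2,R+2]\times[-R',R']$, with $R'$ fixed below, set $u_1=\psi(y)+\chi(x)$. Outside a larger ball, extend $u_1$ to a smooth bounded function that is homogeneous of degree $0$. Then $|\nabla u_1|\lesssim(1+r)^{-1}$, so $u_1\in L^\infty$ and $\nabla u_1\in L^{2p}$ for every $p>1$. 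Next put $M(y)=\int_{\R}f(s,y)\,ds$, which is $C^1$ with support in $[-R,R]$. Define $\Theta$ on $\psi(\R)$ by $\Theta(\psi(y))=M(y)/\psi'(y)$ and extend it by zero. This $\Theta$ is $C^1$ with support in $\psi([-R,R])$. Define
\begin{equation*}
u_2(x,y)=\begin{cases}-\dfrac{1}{\psi'(y)}\displaystyle\int_{-\infty}^x f(s,y)\,ds, & x\le R,\\[2mm] -\Theta\bigl(u_1(x,y)\bigr), & x>R.\end{cases}
\end{equation*}
The two pieces agree at $x=R$, because $\chi(R)=0$ and the integral there equals $M(y)$.

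The verification then runs as follows. For $x<R$ we have $\partial_x u_1=0$ and $\partial_y u_1=\psi'(y)$. Hence $\det\nabla u=-\psi'(y)\,\partial_x u_2=f$ for a.e.\ $x$, by the Lebesgue differentiation theorem in $x$ (this is where measurability in $x$ suffices). For $x>R$, $u_2$ is a $C^1$ function of $u_1$, so $\det\nabla u=0=f$.

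The support of $u_2$ is compact and lies in $Q$. For $x<-R$ the integral vanishes. For $R<x\le R+1$, $u_2\ne0$ forces $\psi(y)+\chi(x)\in\psi([-R,R])$; since $\chi\ge0$ this gives $y\le R$, and since $\chi\le 3$ it gives $\psi(y)\ge\psi(-R)-3$. Choosing $\psi$ so that this last inequality bounds $y$ from below fixes $R'$; if $\psi$ has range in $(-1,1)$ the constraint is vacuous, so one rescales, for example taking $\psi$ with range larger than $4$ on a compact set, and the homogeneous extension is unaffected. For $x\ge R+1$ we get $\psi(y)+3\notin\psi(\R)$, so $u_2=0$. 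Thus $u_2$ is bounded and compactly supported. It is Lipschitz in $x$ (since $f$ is bounded) and $C^1$ in $y$, so $\nabla u_2\in L^\infty$ with compact support, hence in $L^{2p}$. It remains to check that $u\in\dot W^{1,2p}$ in the weak sense. This requires $u_2$ to be absolutely continuous on lines in both directions with the classical partials as weak derivatives, which follows from the Lipschitz/$C^1$ structure and the matching at $x=R$.

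I expect the main obstacle to be the bookkeeping in the gluing. Two things must be arranged simultaneously. First, on the region where $u_2\ne0$, the function $u_1$ must have exactly the form $\psi(y)+\chi(x)$, so the choice of the range of $\psi$ and the height of $\chi$ is what keeps the support of $u_2$ inside $Q$. Second, outside $Q$, $u_1$ must be extendable to a bounded function with $|\nabla u_1|\lesssim 1/r$, so $\psi$ and $\chi$ must be matched to a degree-$0$ profile on $\partial Q$. I would first try taking $Q$ much larger than the support of $u_2$ and interpolating with a cutoff between $\psi(y)+\chi(x)$ and a degree-$0$ function in an annulus where $u_2\equiv0$. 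Nothing is then required of $\det\nabla u$ in that annulus beyond $\det\nabla u=0$, which holds automatically because $u_2$ vanishes there.
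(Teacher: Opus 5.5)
There is a genuine gap, and it sits exactly in the gluing you expected to be bookkeeping. Your local computations are fine: the determinant on $\{x<R\}$, its vanishing on $\{x>R\}$, and the matching at $x=R$. The gap is the claim that $u_2$ is compactly supported in $Q$. This is false, and no choice of $\psi$, $\chi$ or extension can make it true when $\int_{\R^2}f\neq0$.

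The general reason is an integration by parts. For $u\in W^{1,2}_{\rm loc}$ one has $\det\nabla u=\partial_x(u_1\partial_y u_2)-\partial_y(u_1\partial_x u_2)$ in the sense of distributions. Testing against a cutoff equal to $1$ near $\operatorname{supp}u_2$ then gives $\int_{\R^2}f=0$. So your own remark about the total integral applies to every compactly supported $u_2$, not only to the case $\partial_x u_1\equiv0$.

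In your construction the failure shows up as an incompatibility between the two conditions your support argument uses. Write $\psi(\R)=(a,b)$.
\begin{itemize}
\item On $\{x\ge R+1\}$ you need $\psi(y)+3\notin\psi([-R,R])$ for all $y$, which means $\psi(R)\le a+3$.
\item On $\{R<x<R+1\}$ you bound $y$ from below via $\psi(y)\ge\psi(-R)-3$, which needs $a<\psi(-R)-3$.
\end{itemize}
Together these give $\psi(R)<\psi(-R)$, which is impossible. Replacing the height $3$ of $\chi$ by any $H>0$ changes nothing.

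Consequently your last step is circular. On $\{x>R\}$ the function $u_2=-\Theta(u_1)$ is determined by $u_1$. You therefore cannot modify $u_1$ in an annulus and declare $u_2\equiv0$ there. Cutting $u_2$ off by hand instead creates a jump, so $u\notin W^{1,1}_{\rm loc}$.

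Concretely, suppose $M(y_0)\neq0$. Continuity of $u_2$ across $\{x=R\}$ forces $u_1(R,y)-\psi(y_0)$ to have the sign of $y-y_0$. The intermediate value theorem on large half-circles in $\{x\ge R\}$ then shows that the level set $\{u_1=\psi(y_0)\}$ is unbounded, and $u_2\equiv-M(y_0)/\psi'(y_0)\neq0$ on it.

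The missing idea is to accept that $u_2$ is not localized, and to let the geometric component carry the mass to infinity with decaying gradient. Drop $\chi$ and define $u_1$ on all of $\R^2$ with these properties:
\begin{itemize}
\item $u_1$ is bounded and equals $\psi(y)$ on $\{|x|\le R+1\}$;
\item $|\nabla u_1|\lesssim(1+|(x,y)|)^{-1}$, so its level curves through $\{R\}\times(-R,R)$ open up into a cone.
\end{itemize}
For instance, take $\psi=\arctan$ and $u_1=\arctan\bigl((R+1)y/\rho(x)\bigr)$, where $\rho$ is smooth, $\rho\ge R+1$, $\rho=R+1$ on $[-R-1,R+1]$, and $\rho(x)=|x|$ for $|x|$ large.

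Then put $u_2=-\Theta(u_1)$ on the whole half-plane $\{x>R\}$. Continuity at $x=R$ holds since $u_1(R,\cdot)=\psi$. On $\{x>R\}$ you have $\det\nabla u=0$ and $u_2\in L^\infty$. The bound $|\nabla u_2|\le\|\Theta'\|_\infty|\nabla u_1|$ puts $\nabla u_2$ in $L^{2p}$, precisely because $p>1$. On $\{x\le R\}$ your computation is unchanged.

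This is the paper's proof with the components relabelled. There $u_2$ equals $y$ on the box and behaves like $R_0y/|x|$ in a cone. The other component $u_1=\int_{-R_0}^{x}f(t,u_2(x,y))\,dt$ equals $M(u_2)$ for $x\ge R_0$ and is not compactly supported. Its gradient norm $\|\nabla u_1\|_{L^{2p}}$ is controlled by $\|\nabla u_2\|_{L^{2p}}$, which is where $p>1$ enters.
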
 
We would like to mention that the solution constructed in Theorem \ref{theorem-1} for radially symmetric data does not satisfy the {\em radial stretching} condition as defined in Lindberg \cite{LindbergThesis}. The construction works for all $C_c^1$ functions. Although, main interest of the topic is to find radial stretching solution for a radially symmetric data, our construction only guarantees the existence but no symmetry. 
\begin{corollary}
	Let $f\in C_c^\f(\R^2)$. For each $p>1$, there exists a function $u\in \dot{W}^{1,2p}(\R^2)\cap L^\f(\R^2)$ such that
	\begin{equation}
		\det(\nabla u(x,y))= f(x,y) \mbox{ for all }(x,y)\in \R^2.
	\end{equation}
	Furthermore, it holds that $u\in C^\f(\R^2)$.
\end{corollary}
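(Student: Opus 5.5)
The plan is to use Theorem \ref{theorem-1} directly and then upgrade the regularity. A function $f\in C_c^\infty(\R^2)$ satisfies every hypothesis of Theorem \ref{theorem-1}. For each fixed $y$, the map $x\mapsto f(x,y)$ is bounded and measurable, since it is even continuous and compactly supported. For each fixed $x$, the map $y\mapsto f(x,y)$ is $C^1$. So for every $p>1$, Theorem \ref{theorem-1} gives $u\in \dot W^{1,2p}(\R^2)\cap L^\infty(\R^2)$ with $\det(\nabla u)=f$ almost everywhere. It remains to show two things. First, the map produced by the construction is $C^\infty$ when $f$ is. Second, the identity then holds at every point.

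For smoothness I would go back to the explicit form of the construction in the proof of Theorem \ref{theorem-1}. That proof is in the spirit of Moser and is not shown above, so the following description of its shape is my assumption. I expect one component, say $u_1$, to be built from a fixed profile that is bounded and strictly monotone in one variable. An example would be $\arctan$ composed with a coordinate, possibly modulated by cutoffs. I expect the other component $u_2$ to be given by a primitive of $f$ in one variable, divided by the nonvanishing derivative of that profile and localized to a strip by a cutoff.

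Each ingredient is then checked separately. The auxiliary profiles and cutoffs can be chosen in $C^\infty$, and any piecewise definition in the theorem can be replaced by a smooth one. Quotients by strictly positive smooth weights are smooth. Primitives of the form $\int_{-\infty}^{x} f(s,y)\,ds/w(y)$ are $C^\infty$ in $(x,y)$, because $f\in C_c^\infty$ allows differentiation under the integral sign in both variables to any order. Compositions and products of such terms are smooth. Hence $u\in C^\infty(\R^2)$.

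Once $u$ is smooth, the function $\det(\nabla u)-f$ is continuous and vanishes almost everywhere, so it vanishes everywhere. This gives $\det(\nabla u(x,y))=f(x,y)$ for all $(x,y)\in\R^2$. The bounds $u\in\dot W^{1,2p}\cap L^\infty$ are inherited unchanged from Theorem \ref{theorem-1}.

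The main obstacle is the bookkeeping in the smoothness step. The theorem assumes only measurability in $x$, so its proof may use choices that are merely Lipschitz or piecewise, such as truncations or kinks in the auxiliary functions. For each of these I would substitute a $C^\infty$ mollified version with the same qualitative properties: monotonicity, positivity of the relevant derivative, and the same support and decay. I would then verify that this substitution preserves the algebraic identity $\det\nabla u=f$ and the integrability estimates giving $\nabla u\in L^{2p}$. If the construction is already stated with smooth auxiliary functions, this step reduces to differentiation under the integral sign.
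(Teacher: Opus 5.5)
Your plan matches the paper, which justifies the corollary only by the remark that the Lipschitz $u_2$ in the proof of Theorem~\ref{theorem-1} can be replaced by a smooth one; then $u_1(x,y)=\int_{-R_0}^{x}f(t,u_2(x,y))\,dt$ is $C^\infty$ by differentiation under the integral sign, $\det(\nabla u)=f(x,u_2)\,\partial_y u_2=f$ holds pointwise, and the $L^{2p}$ bound on $\nabla u_1$ is re-derived from $\nabla u_2\in L^{2p}$. Your guessed shape of the construction is wrong: in Theorem~\ref{theorem-1} the profile is $u_2$, and $u_1$ is a primitive of $f$ composed with $u_2$, with no division (your guess resembles the paper's Schwartz-data construction instead). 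So what the smoothing must preserve is $u_2=y$ on a neighbourhood of $\mbox{supp}\,f$, $f(x,u_2(x,y))=0$ off that neighbourhood, and $u_2$ bounded with $\nabla u_2\in L^{2p}$ --- all of which a small mollification of the paper's $u_2$ keeps, because $\mbox{supp}\,f$ is compact inside the open square $(-R_0,R_0)^2$.
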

The key highlights of our results are following.
\begin{itemize}
	\item {\em No integral constraint:} By proving this for any $f \in C_c^\infty$, we have bypassed the restrictive barrier in the Jacobian equation.
	\item {\em Specific Geometry:} In the Dacorogna-Moser flow, the solution typically spreads out in all directions. Our construction is anisotropic (one component is constant at infinity, the other is localized in a strip).
\end{itemize}

The note is organized as follows. In Section \ref{sec:proof-main} we give the explicit constructions for smooth compactly supported and rapidly decaying data. Section \ref{sec:discont} contains examples for discontinuous data, including characteristic functions of convex compact sets.

\section{Proof of main result}\label{sec:proof-main}
In this section, we present our construction of solution $u$ to the Jacobian equation \eqref{eq:intro-jacobian} for $C_c^\f$ data $f$.  One observation that we can make is the following: suppose $u_1$ and $u_2$ are defined such a way that $\nabla u_1=(f(x,y),0)$ and $\nabla u_2=(0,1)$ in the support of $f$. Then we can check that this simple choice gives us the solution. Difficulty arises when we try to find such $u_1$ and $u_2$. Since outside of the support of $f$ we need the determinant of $\nabla u$ to be $0$, it requires the $\nabla u_1$ and $\nabla u_2$ to be parallel. In our construction below, we are going to exploit such idea by considering $u_1(x,y)$ in terms of $u_2$. The proof follows a similar approach as in Moser \cite{Moser65} except the choice of $u_2$.
%
%
%

\begin{proof}[Proof of Theorem \ref{theorem-1}]
		Let $R_0>0$ be such that $\mbox{supp}(f)\subset (-R_0,R_0)\times (-R_0,R_0)$. We wish to construct functions $u_i:\R^2\rr\R$ for $i=1,2$ such that $\det(\nabla u)=f$ where $u=(u_1,u_2)$. Let us consider $u_2(x,y)$ to be a Lipschitz function on $\R^2$ such that
	\begin{enumerate}[(i.)]
		\item For $|x|\leq 2R_0$ and $|y|\geq 2R_0$, $|u_2(x,y)|\geq R_0$.
		\item For $|x|\leq 2R_0$ and $|y|\leq 2R_0$, $u_2(x,y)=y$. 
	\end{enumerate}
	We will give an explicit construction of such $u_2$ later. Suppose that such $u_2$ exists. Similar to \cite{Moser65}, we then set
	\begin{equation*}
		u_1(x,y):=	\int_{-R_0}^{x}f(t,u_2(x,y))dt.
	\end{equation*}
	From our choice of $u_2$, we get
	\begin{equation*}
		u_1(x,y)=	\int_{-R_0}^{x}f(t,R_0)dt=0\mbox{ when }|x|\leq R_0\mbox{ and } |y|\geq R_0,
	\end{equation*}
	since $f(t,z)=0$ for any $t\in\R$ and $|z|\geq R_0$. We can now calculate
	\begin{align}
		\pa_x u_1(x,y) &=f(x,u_2(x,y))+\pa_xu_2(x,y)\int_{-R_0}^{x}\pa_2f(t,u_2(x,y))dt,\label{derivative-u1-x}\\
		\pa_y u_1(x,y) &=\pa_yu_2(x,y)\int_{-R_0}^{x}\pa_2f(t,u_2(x,y))dt.\label{derivative-u1-y}
	\end{align}
	Here $\pa_2$ denotes the partial derivative of the function $f$ with respect to second argument. Now, we may calculate $\det(\nabla u)$ as follows
	\begin{align*}
		\det(\nabla u(x,y))&=\pa_x u_1(x,y)\pa_y u_2(x,y)-\pa_y u_1(x,y)\pa_x u_2(x,y)\\
		&=f(x,u_2(x,y))\pa_y u_2(x,y)+\pa_xu_2(x,y)\pa_y u_2(x,y)\int_{-R_0}^{x}\pa_2f(t,u_2(x,y))dt\\
		&-\pa_x u_2(x,y)\pa_yu_2(x,y)\int_{-R_0}^{x}\pa_2f(t,u_2(x,y))dt\\
		&=f(x,u_2(x,y))\pa_y u_2(x,y).
	\end{align*}
	Observe that the following
	\begin{enumerate}
		\item when $|x|\geq R_0$ we have $f(x,u_2(x,y))=0$.
		\item For $|x|\leq R_0$ and $|y|\geq R_0$ from property (i) of $u_2$ we again get $f(x,u_2(x,y))=0$ since support of $f$ is contained in side $(-R_0,R_0)\times (-R_0,R_0)$. 
		\item When $|x|\leq R_0$ and $|y|\leq R_0$ , we have $\pa_y u_2(x,y)=1$ from property (ii).
	\end{enumerate} 
	Hence, we have
	\begin{equation}
		\det(\nabla u(x,y))=f(x,y)\mbox{ for all }(x,y)\in\R^2. 
	\end{equation}
	We define the now, the function $u_2$ as follows
	\begin{enumerate}
		\item For $-\f<x\leq -R_0$,
		\begin{equation*}
			u_2(x,y)=\left\{
			\begin{array}{cl}
				\frac{2R_0^2y}{R_0^2+x^2}&\text{ for }-\f<x\leq -R_0,|y|\leq R_0,\\
				\frac{2R_0^3\left(3R_0-x-y\right)}{(R_0^2+x^2)(2R_0-x)}+\frac{R_0(y-R_0)}{2R_0-x}&\text{ for }-\f<x\leq -R_0, R_0\leq y\leq 3R_0-x,\\
				R_0&\text{ for }-\f<x\leq -R_0, y\geq 3R_0-x,\\
				-\frac{2R_0^3\left(3R_0-x+y\right)}{(R_0^2+x^2)(2R_0-x)}+\frac{R_0(y+R_0)}{2R_0-x}&\text{ for }-\f<x\leq -R_0, -3R_0+x\leq y\leq -R_0,\\
				-R_0&\text{ for }-\f<x\leq -R_0, y\leq -3R_0+x.
			\end{array}\right.
		\end{equation*}
		\item For $-R_0\leq x\leq R_0$,
		\begin{equation*}
			u_2(x,y)=\left\{
			\begin{array}{cl}
				y&\text{ for }-R_0\leq x\leq R_0,|y|\leq R_0,\\
				R_0&\text{ for }-R_0\leq x\leq -R_0, R_0\leq y<+\f,\\ 
				-R_0&\text{ for }-R_0\leq x\leq -R_0, -\f<y\leq -R_0.
			\end{array}\right.
		\end{equation*}
		\item For $R_0\leq x<+\f$,
		\begin{equation*}
			u_2(x,y)=\left\{
			\begin{array}{cl}
				\frac{2R_0^2y}{R_0^2+x^2}&\text{ for }R_0\leq x< +\f,|y|\leq R_0,\\
				\frac{2R_0^3\left(3R_0+x-y\right)}{(R_0^2+x^2)(2R_0+x)}+\frac{R_0(y-R_0)}{2R_0+x}&\text{ for }R_0\leq x<+\f, R_0\leq y\leq 3R_0+x,\\
				R_0&\text{ for }R_0\leq x<+\f, y\geq 3R_0+x,\\
				-\frac{2R_0^3\left(3R_0+x+y\right)}{(R_0^2+x^2)(2R_0+x)}+\frac{R_0(y+R_0)}{2R_0+x}&\text{ for }R_0\leq x<+\f, -3R_0-x\leq y\leq -R_0,\\
				-R_0&\text{ for }R_0\leq x<+\f, y\leq -3R_0-x.
			\end{array}\right.
		\end{equation*}
	\end{enumerate}
	One can check that $u_2\in Lip(\R^2)$. Next we show that $\nabla u\in \dot{W}^{1,2p}(\R^2)$. From \eqref{derivative-u1-x} and \eqref{derivative-u1-y}, we can estimate $||\cdot||_{L^{2p}(\R^2)}$ of $\nabla u_1$ as follows.
	\begin{align*}
		&\int_{\R^2}(|\pa_xu_1(x,y)|^{2p}+|\pa_yu_1(x,y)|^{2p})dxdy\\
		&\leq C_p\int_{\R}\int_{-R_0}^{R_0}|f(x,u_2(x,y))|^{2p}dxdy+C_p\int_{-R_0}^{\f}\int_{\R}|\pa_xu_2(x,y)|^{2p}\int_{-R_0}^{x}|\pa_2f(t,u_2(x,y))|^{2p}dtdydx\\
		&+\int_{-R_0}^{\f}\int_{\R}|\pa_yu_2(x,y)|^{2p}\int_{-R_0}^{x}|\pa_2f(t,u_2(x,y))|^{2p}dtdydx,
	\end{align*}
	for some constant $C_p>1$. Here we have used Jensen's inequality. Therefore, we get
	\begin{align*}
		&\int_{\R^2}(|\pa_xu_1(x,y)|^{2p}+|\pa_yu_1(x,y)|^{2p})dxdy\\
		&\leq C_p\int_{-R_0}^{R_0}\int_{-R_0}^{R_0}|f(x,u_2(x,y))|^{2p}dxdy\\
		&+C_p\left(\sup\limits_{z\in \R}\int_{-R_0}^{R_0}|\pa_2f(t,z)|^{2p}dt\right)\left(\int_{-R_0}^{\f}\int_{\R}|\pa_xu_2(x,y)|^{2p}dydx+\int_{-R_0}^{\f}\int_{\R}|\pa_yu_2(x,y)|^{2p}dydx\right)\\
		&\leq C_p\int_{-R_0}^{R_0}\int_{-R_0}^{R_0}|f(x,u_2(x,y))|^{2p}dxdy+C_p\left(\sup\limits_{z\in \R}\int_{-R_0}^{R_0}|\pa_2f(t,z)|^{2p}dt\right)||\nabla u_2||^{2p}_{L^{2p}(\R^2)}.
	\end{align*}
Now, it remains to prove that $\nabla u_2\in L^{2p}(\R^2)$. We can obtain
\begin{align*}
			&\int_{\R^2}(|\pa_xu_2(x,y)|^{2p}+|\pa_yu_2(x,y)|^{2p})dxdy\\
			&\leq \int_{\R}\int_{-\f}^{-R_0}(|\pa_xu_2(x,y)|^{2p}+|\pa_yu_2(x,y)|^{2p})dxdy\\
			&+\int_{\R}\int_{-R_0}^{R_0}(|\pa_xu_2(x,y)|^{2p}+|\pa_yu_2(x,y)|^{2p})dxdy\\
			&+\int_{\R}\int_{R_0}^{\f}(|\pa_xu_2(x,y)|^{2p}+|\pa_yu_2(x,y)|^{2p})dxdy\\
			&=:I_1+I_2+I_3.
\end{align*}
Since $\nabla u_2\in L^\f(\R^2)$, we can conclude that $I_2<+\f$. Next we are going to show that $I_1<+\f$. Proof for $I_3<+\f$ follows in a similar way and we will omit here. From the definition, we can calculate the derivative of $u_2$ as follows.
\begin{enumerate}
	\item For $-\f<x< -R_0,|y|< R_0$, we have
	\begin{equation*}
		\pa_x u_2(x,y)=-\frac{4R_0^2xy}{(R^2_0+x^2)^2}\mbox{ and }\pa_y u_2(x,y)=\frac{2R_0^2}{R_0^2+x^2}.
	\end{equation*}
	\item For $-\f<x< -R_0, R_0< y<3R_0-x$, it follows
	\begin{align*}
		\pa_x u_2(x,y)&=-\frac{2R_0^3}{(R_0^2+x^2)(2R_0-x)}+\frac{2R_0^3\left(3R_0-x-y\right)}{(R_0^2+x^2)(2R_0-x)^2}\\
		&-\frac{4R_0^3x\left(3R_0-x-y\right)}{(R_0^2+x^2)^2(2R_0-x)}+\frac{R_0(y-R_0)}{(2R_0-x)^2},\\
		\pa_y u_2(x,y)&=-\frac{2R_0^3}{(R_0^2+x^2)(2R_0-x)}+\frac{R_0}{2R_0-x}.
	\end{align*}
	\item For $-\f<x< -R_0, y> 3R_0-x$, we get
	\begin{equation*}
		\pa_xu_2(x,y)=0\mbox{ and }\pa_yu_2(x,y)=0.
	\end{equation*}
	\item For $-\f<x< -R_0, -3R_0+x< y<-R_0$, it follows
	\begin{align*}
			\pa_x u_2(x,y)&=\frac{2R_0^3}{(R_0^2+x^2)(2R_0-x)}-\frac{2R_0^3\left(3R_0-x+y\right)}{(R_0^2+x^2)(2R_0-x)^2}\\
			&+\frac{4R_0^3x\left(3R_0-x+y\right)}{(R_0^2+x^2)^2(2R_0-x)}+\frac{R_0(y+R_0)}{(2R_0-x)^2},\\
			\pa_y u_2(x,y)&=-\frac{2R_0^3}{(R_0^2+x^2)(2R_0-x)}+\frac{R_0}{2R_0-x}.
	\end{align*}
	\item For $-\f<x\leq -R_0, y< -3R_0+x$, we have
	\begin{equation*}
		\pa_xu_2(x,y)=0\mbox{ and }\pa_yu_2(x,y)=0.
	\end{equation*}
\end{enumerate}
Then, it follows
\begin{align*}
	&\int_{-\f}^{-R_0}\int_\R (|\pa_xu_1(x,y)|^{2p}+|\pa_yu_1(x,y)|^{2p})dxdy\\
	&\leq 	C_p\int_{-\f}^{-R_0}\int_{-R_0}^{R_0}\left(\frac{(4R_0^2)^{2p}|x|^{2p}|y|^{2p}}{(R^2_0+x^2)^{4p}}+\frac{(2R_0^2)^{2p}}{(R_0^2+x^2)^{2p}}\right)dydx\\
	&+C_p\int_{-\f}^{-R_0}\int_{R_0}^{3R_0-x}\left(\frac{(2R_0^3)^{2p}}{(R_0^2+x^2)^{2p}(2R_0-x)^{2p}}+\frac{(2R_0^3)^{2p}|3R_0-x-y|^{2p}}{(R_0^2+x^2)^{2p}(2R_0-x)^{4p}}\right)dydx\\
	&+C_p\int_{-\f}^{-R_0}\int_{R_0}^{3R_0-x}\left(\frac{(4R_0^3)^{2p}|x|^{2p}|3R_0-x-y|^{2p}}{(R_0^2+x^2)^{4p}(2R_0-x)^{2p}}+\frac{R_0^{2p}|y-R_0|^{2p}}{(2R_0-x)^{4p}}\right)dydx\\
	&+C_p\int_{-\f}^{-R_0}\int_{R_0}^{3R_0-x}\left(\frac{(2R_0^3)^{2p}}{(R_0^2+x^2)^{2p}(2R_0-x)^{2p}}+\frac{R_0^{2p}}{(2R_0-x)^{2p}}\right)dydx\\
	&+C_p\int_{-\f}^{-R_0}\int_{-3R_0+x}^{-R_0}\left(\frac{(2R_0^3)^{2p}}{(R_0^2+x^2)^{2p}(2R_0-x)^{2p}}+\frac{(2R_0^3)^{2p}|3R_0-x+y|^{2p}}{(R_0^2+x^2)^{2p}(2R_0-x)^{4p}}\right)dydx\\
	&+C_p\int_{-\f}^{-R_0}\int_{-3R_0+x}^{-R_0}\left(\frac{(4R_0^3)^{2p}|x|^{2p}|3R_0-x+y|^{2p}}{(R_0^2+x^2)^{4p}(2R_0-x)^{2p}}+\frac{R_0^{2p}|y+R_0|^{2p}}{(2R_0-x)^{4p}}\right)dydx\\
	&+C_p\int_{-\f}^{-R_0}\int_{-3R_0+x}^{-R_0}\left(\frac{(2R_0^3)^{2p}}{(R_0^2+x^2)^{2p}(2R_0-x)^{2p}}+\frac{R_0^{2p}}{(2R_0-x)^{2p}}\right)dydx\\
	&=I_{11}+I_{12}+I_{13}+I_{14}+I_{15}+I_{16}+I_{17}.
\end{align*}
Next we estimate $I_{11},I_{12},I_{13}$ and $I_{14}$. 
\begin{align*}
		I_{11}&=C_p\int_{-\f}^{-R_0}\int_{-R_0}^{R_0}\left(\frac{(4R_0^2)^{2p}|x|^{2p}|y|^{2p}}{(R^2_0+x^2)^{4p}}+\frac{(2R_0^2)^{2p}}{(R_0^2+x^2)^{2p}}\right)dydx\\
				&\leq 	C_p2^{4p}R_0^{4p}\int_{-\f}^{-R_0}\int_{-R_0}^{R_0}\left(\frac{|x|^{2p}|y|^{2p}}{(R^2_0+x^2)^{4p}}+\frac{1}{(R_0^2+x^2)^{2p}}\right)dydx\\
				&\leq 	C_p2^{4p+1}R_0^{4p+1}\int_{-\f}^{-R_0}\frac{2}{(R^2_0+x^2)^{2p}}dx\\
				&\leq 	C_p2^{4p+1}R_0^{4p+1}\int_{-\f}^{-R_0}\frac{2^{2p+1}}{(R_0-x)^{2p}}dx\quad(\mbox{since }2(R^2_0+x^2)\geq (R_0-x)^2)\\
				&=\frac{C_p}{2p-1}2^{6p+2}R_0^{2p+2}.
\end{align*}
Then, we analyze $I_{12}$.
\begin{align*}
	I_{12}&=C_p\int_{-\f}^{-R_0}\int_{R_0}^{3R_0-x}\left(\frac{(2R_0^3)^{2p}}{(R_0^2+x^2)^{2p}(2R_0-x)^{2p}}+\frac{(2R_0^3)^{2p}|3R_0-x-y|^{2p}}{(R_0^2+x^2)^{2p}(2R_0-x)^{4p}}\right)dydx\\
	&=C_p\int_{-\f}^{-R_0}\left(\frac{(2R_0^3)^{2p}(2R_0-x)}{(R_0^2+x^2)^{2p}(2R_0-x)^{2p}}+\frac{1}{2p+1}\frac{(2R_0^3)^{2p}|2R_0-x|^{2p+1}}{(R_0^2+x^2)^{2p}(2R_0-x)^{4p}}\right)dx\\
	&\leq C_p(2R_0^3)^{2p}\int_{-\f}^{-R_0}\frac{2}{(R_0^2+x^2)^{2p}(2R_0-x)^{2p-1}}dx\\
	&\leq C_p(2R_0^3)^{2p}\int_{-\f}^{-R_0}\frac{2^{2p+1}}{(R_0-x)^{2p}}\frac{1}{(3R_0)^{2p-1}}dx\quad(\mbox{since }2(R^2_0+x^2)\geq (R_0-x)^2)\\
	&=\frac{C_p(2R_0^3)^{2p}\cdot 2^{2p+1}}{(2p-1)(2R_0)^{2p-1}(3R_0)^{2p-1}}.
\end{align*}
Now, we estimate $I_{13}$ as follows.
\begin{align*}
	I_{13}&=C_p\int_{-\f}^{-R_0}\int_{R_0}^{3R_0-x}\left(\frac{(4R_0^3)^{2p}|x|^{2p}|3R_0-x-y|^{2p}}{(R_0^2+x^2)^{4p}(2R_0-x)^{2p}}+\frac{R_0^{2p}|y-R_0|^{2p}}{(2R_0-x)^{4p}}\right)dydx\\
	&=\frac{C_p}{2p+1}\int_{-\f}^{-R_0}\left(\frac{(4R_0^3)^{2p}|x|^{2p}(2R_0-x)^{2p+1}}{(R_0^2+x^2)^{4p}(2R_0-x)^{2p}}+\frac{R_0^{2p}(2R_0-x)^{2p+1}}{(2R_0-x)^{4p}}\right)dx\\
&=\frac{C_p}{2p+1}\int_{-\f}^{-R_0}\left(\frac{(4R_0^3)^{2p}|x|^{2p}(2R_0-x)}{(R_0^2+x^2)^{4p}}+\frac{R_0^{2p}}{(2R_0-x)^{2p-1}}\right)dx\\
&\leq \frac{C_p}{2p+1}\int_{-\f}^{-R_0}\left(\frac{2(4R_0^3)^{2p}(R_0-x)}{(R_0^2+x^2)^{2p}}+\frac{R_0^{2p}}{(2R_0-x)^{2p-1}}\right)dx\\
&\leq \frac{C_p}{2p+1}\int_{-\f}^{-R_0}\left(\frac{2^{2p+1}(4R_0^3)^{2p}}{(R_0-x)^{4p-1}}+\frac{R_0^{2p}}{(2R_0-x)^{2p-1}}\right)dx\quad(\mbox{since }2(R^2_0+x^2)\geq (R_0-x)^2)\\
&= \frac{C_p}{2p+1} \left(\frac{1}{4p-2}\frac{2^{2p+1}(4R_0^3)^{2p}}{(2R_0)^{4p-2}}+\frac{1}{2p-2}\frac{R_0^{2p}}{(3R_0)^{2p-2}}\right).
\end{align*}
Finally, we bound $I_{14}$ as follows.
\begin{align*}
	I_{14}&=C_p\int_{-\f}^{-R_0}\int_{R_0}^{3R_0-x}\left(\frac{(2R_0^3)^{2p}}{(R_0^2+x^2)^{2p}(2R_0-x)^{2p}}+\frac{R_0^{2p}}{(2R_0-x)^{2p}}\right)dydx\\
	&=C_p\int_{-\f}^{-R_0}\left(\frac{(2R_0^3)^{2p}(2R_0-x)}{(R_0^2+x^2)^{2p}(2R_0-x)^{2p}}+\frac{R_0^{2p}(2R_0-x)}{(2R_0-x)^{2p}}\right)dx\\
		&=C_p\int_{-\f}^{-R_0}\left(\frac{(2R_0^3)^{2p}}{(R_0^2+x^2)^{2p}(2R_0-x)^{2p-1}}+\frac{R_0^{2p}}{(2R_0-x)^{2p-1}}\right)dx\\
		&\leq C_p\int_{-\f}^{-R_0}\left(\frac{2^{2p}(2R_0^3)^{2p}}{(R_0-x)^{2p}(3R_0)^{2p-1}}+\frac{R_0^{2p}}{(2R_0-x)^{2p-1}}\right)dx\\
		&=C_p\left(\frac{2^{2p}(2R_0^3)^{2p}}{(2p-1)(2R_0)^{2p-1}(3R_0)^{2p-1}}+\frac{R_0^{2p}}{(2p-2)(3R_0)^{2p-2}}\right).
\end{align*}
Note that here we have used the fact that $p>1$. The proof of bounds for $I_{15},I_{16}$ and $I_{17}$ follows by similar arguments as for $I_{12}$, $I_{13}$ and $I_{14}$ respectively. We omit the details here. This completes the proof of Theorem \ref{theorem-1}.
\end{proof}
\begin{remark}
	\begin{enumerate}
		\item 	We remark that solutions constructed in the proof of Theorem \ref{theorem-1} depend on the choice of cut-off parameters. It is needless to say that these solutions are not unique.
		\item It is not difficult to see that we can construct a $C^\f$ solution $u$ from the construction above by considering the smooth $u_2$ instead of Lipschitz one.
		\item As it has been pointed out in the proof that our construction only provides $\nabla u\in L^{2p}$ for $p>1$. We can not repeat the same for $p=1$.
	\end{enumerate}

\end{remark}

	\subsection{Construction of solutions for Schwartz functions}\label{sec:Schwartz}
	By exploring the idea from proof of Theorem \ref{theorem-1}, we can also construct the solution for rapidly decaying data.
	\begin{proposition}\label{prop:Schwartz}
		For any $h\in \mathcal{S}(\R^2)$ there exists $u=(u_1,u_2)\in C^1(\R^2)$ such that
		\begin{equation}
			\det(\nabla u(x,y))=h(x,y)\mbox{ for all }(x,y)\in \R^2.
		\end{equation}
		Furthermore, it follows $\nabla u\in L^{\f}(\R^2)$.
	\end{proposition}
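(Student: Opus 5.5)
We follow the Moser-type ansatz from the proof of Theorem \ref{theorem-1}, but with a simpler choice of $u_2$. For Schwartz data no finite box contains the support, so instead of a cut-off in $y$ we take a map $u_2$ depending on $y$ only. With $u_2=u_2(y)$, the formula $u_1(x,y)=\int_{-\f}^{x}h(t,u_2(y))\,dt$ gives $\pa_x u_1=h(x,u_2(y))$, and the determinant computation from the proof of Theorem \ref{theorem-1} gives
\begin{equation*}
\det(\nabla u)=\pa_xu_1\,\pa_yu_2-\pa_yu_1\,\pa_xu_2=h(x,u_2(y))\,u_2'(y),
\end{equation*}
since $\pa_x u_2=0$. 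Taking $u_2(y)=y$ then yields $\det(\nabla u)=h$ exactly, with $u\in C^1$ (indeed $C^\f$) by differentiating under the integral. This is legitimate because $h$ and all its derivatives decay faster than any polynomial, so the integrals in $t$ converge absolutely and locally uniformly.

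It remains to make $\nabla u$ bounded. We have $\pa_xu_1=h$, which is bounded, $\nabla u_2=(0,1)$, and
\begin{equation*}
\pa_yu_1(x,y)=\int_{-\f}^{x}\pa_2h(t,y)\,dt ,
\end{equation*}
which is bounded by $\sup_{y}\int_{\R}|\pa_2h(t,y)|\,dt<\f$. So $\nabla u\in L^\f$ directly.

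If one instead wants to imitate the \emph{separation of variables} mentioned in the introduction, write $u_1=\int_{-\f}^x h(t,y)\,dt$ and $u_2=y$ as above. The only points to verify are the following.
\begin{enumerate}
\item The continuity of $\pa_yu_1$. This follows by dominated convergence, using $|\pa_2h(t,y)|\le C_N(1+|t|)^{-2}$ uniformly in $y$.
\item The identity $\pa_y\int_{-\f}^x h\,dt=\int_{-\f}^x\pa_2 h\,dt$. This holds by the same domination.
\end{enumerate}

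The main (mild) obstacle is thus only the justification of differentiation under an improper integral, which the Schwartz decay handles. Note that, unlike in Theorem \ref{theorem-1}, we do not get $\nabla u\in L^{2p}$: $u_2=y$ has a non-integrable gradient. This is consistent with the proposition claiming only $\nabla u\in L^\f$.
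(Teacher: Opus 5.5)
Your argument is correct, and it is a simpler variant of the paper's construction rather than the same one.

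Both proofs take $u_2$ depending on $y$ only and $u_1$ an $x$-antiderivative of $h$. Then $\nabla u$ is upper triangular and $\det(\nabla u)=\partial_xu_1\,\partial_yu_2$.

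\begin{itemize}
\item You choose $u_2=y$ and $u_1=\int_{-\infty}^x h(t,y)\,dt$, so the diagonal of $\nabla u$ is simply $(h,1)$.
\item The paper chooses the bounded function $u_2(y)=\int_0^y(1+|s|^{\beta})^{-1}\,ds$ (integer $\beta\ge 2$). It compensates with a weight, $u_1=(1+|y|^{\beta})\int_0^x h(t,y)\,dt$, so the diagonal is $\bigl((1+|y|^\beta)h,\,(1+|y|^\beta)^{-1}\bigr)$. Boundedness of $\nabla u$ then uses the decay of $h$ and $\partial_yh$ in $y$ to absorb the factors $1+|y|^\beta$ and $\beta|y|^{\beta-1}$.
\end{itemize}

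What the weight buys is that the paper's map is itself bounded, $u\in L^\infty(\mathbb{R}^2)$, as in Theorem~\ref{theorem-1}. Your $u_2=y$ is unbounded. The proposition only asks for $u\in C^1$ with $\nabla u\in L^\infty$, so this does not affect your proof.

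Conversely, your route needs much less of $h$: continuity of $h$ and $\partial_yh$, boundedness of $h$, and a $y$-independent integrable majorant in $t$ for $|h|+|\partial_yh|$. No decay in $y$ is needed at all.

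Finally, your closing remark about losing $\nabla u\in L^{2p}$ does not distinguish the two constructions. In the paper's construction $\partial_yu_2=(1+|y|^\beta)^{-1}$ does not depend on $x$, so its gradient lies in no $L^q$ with $q<\infty$ either.
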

	\begin{proof}
		Now we consider $u=(u_1,u_2)$ as follows
		\begin{equation}
			u_1(x,y)=(1+|y|^\B)\int_0^xh\left(t,y\right)\,dt\mbox{ and }u_2(x,y)=\int_0^y\frac{1}{1+|s|^\B}\,ds,
		\end{equation}
		for some integer $\B\geq2$. It can be computed that
		\begin{equation*}
			\nabla u(x,y)=\begin{pmatrix}
				(1+|y|^\B)h(x,y)&(1+|y|^\B)\int_0^x\pa_yh(t,y)\,dt+\B sgn(y)|y|^{\B-1}\int_0^xh(t,y)\,dt\\
				0&\frac{1}{1+|y|^\B}
			\end{pmatrix}.
		\end{equation*}
		We can check that $	\det(\nabla u(x,y))=h(x,y)$ and $\nabla u\in L^\f(\R^2)$. This completes the proof of Proposition \ref{prop:Schwartz}.
	\end{proof}

\section{Example of solutions for discontinuous data}\label{sec:discont}
In this section, we construct solutions corresponding to a specific class of discontinuous data. More specifically, we consider data that are characteristic function of a convex set. For $1\leq p<+\f$, we denote $|(x_1,x_2)|_p=(|x_1|^p+|x_2|^p)^{1/p}$.
\begin{example}
	Let $f$ be the indicator function of unit ball under $\abs{\cdot}_{1}$, that is, $f=\mathbbm{1}_{B_1}$ where $B_1=\{x;\,\abs{x}_1=|x_1|+|x_2|\leq 1\}$. We set
	\begin{equation}
		u^+(x):=\left\{\begin{array}{cl}
			x&\mbox{ for }\abs{x}_1\leq 1,\\
			\frac{x}{|x_1|+|x_2|} &\mbox{ for }\abs{x}_1>1.
		\end{array}\right.
	\end{equation}
	Subsequently, it follows from an elementary computation, 
	\begin{equation}
		\nabla u^+(x)=\left\{\begin{array}{cl}
			I_2&\mbox{ for }\abs{x}_1< 1,\\
			\frac{1}{|x|^3_1} \begin{pmatrix}
				|x_2|&-\mbox{sgn}(x_2)x_1\\
				-\mbox{sgn}(x_1)x_2&|x_1|
			\end{pmatrix}&\mbox{ for }\abs{x}_1>1.
		\end{array}\right.
	\end{equation}
	Then it follows,
	\begin{equation}
		\det(\nabla u^+(x))=\left\{\begin{array}{cl}
			1&\mbox{ for }\abs{x}_1< 1,\\
			0&\mbox{ for }\abs{x}_1>1.
		\end{array}\right.
	\end{equation}
	We can also consider
	\begin{equation}
		u^-(x):=\left\{\begin{array}{cl}
			(x_2,x_1)&\mbox{ for }\abs{x}_1\leq 1,\\
			\frac{(x_2,x_1)}{|x_1|+|x_2|} &\mbox{ for }\abs{x}_1>1.
		\end{array}\right.
	\end{equation}
	Then, we get
	\begin{equation}
		\nabla u^-(x)=\left\{\begin{array}{cl}
			\begin{pmatrix}
				0&1\\
				1&0
			\end{pmatrix}&\mbox{ for }\abs{x}_1< 1,\\
			\frac{1}{|x|^3_1} \begin{pmatrix}
				|x_1|&-\mbox{sgn}(x_1)x_2\\
				-\mbox{sgn}(x_2)x_1&|x_2|
			\end{pmatrix}&\mbox{ for }\abs{x}_1>1.
		\end{array}\right.
	\end{equation}
	Consequently, we obtain
	\begin{equation}
		\det(\nabla u^-(x))=\left\{\begin{array}{cl}
			-1&\mbox{ for }\abs{x}_1< 1,\\
			0&\mbox{ for }\abs{x}_1>1.
		\end{array}\right.
	\end{equation}
\end{example}

\begin{proposition}\label{prop:discont}
	Let $K\subset\R^d$ with $d\geq2$ be a convex compact set. Then there exists $u\in \dot{W}^{1,dp}(\R^d)\cap L^\f(\R^d)$ such that
	\begin{equation}
		\det(\nabla u)=\mathbbm{1}_{K}.
	\end{equation}
\end{proposition}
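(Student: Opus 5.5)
The plan is to generalize the Example above by replacing the $\ell^1$-ball with the gauge of $K$, i.e.\ to take $u$ to be the ``radial retraction'' onto $K$. First dispose of the degenerate case: if $K$ has empty interior, then $K$ lies in an affine hyperplane, so $|K|=0$ and $\mathbbm{1}_K=0$ a.e.; any constant map $u$ then works, and the equation holds a.e. Otherwise, after a translation (which changes neither the Jacobian nor the function spaces) we may assume $B(0,r)\subset K\subset B(0,R)$ for some $0<r<R$. Let $\rho(x)=\inf\{t>0:\ x\in tK\}$ be the Minkowski functional of $K$. Standard facts give that $\rho$ is convex, positively $1$-homogeneous and Lipschitz, that $|x|/R\le \rho(x)\le |x|/r$, and that $K=\{\rho\le 1\}$. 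Define
\begin{equation*}
u(x):=\left\{\begin{array}{cl} x&\mbox{ for }\rho(x)\leq 1,\\ \frac{x}{\rho(x)}&\mbox{ for }\rho(x)>1,\end{array}\right.
\end{equation*}
which for $K=B_1$ in $|\cdot|_1$ is exactly $u^+$ from the Example.

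Next I would verify the regularity. The two formulas agree on $\{\rho=1\}=\partial K$, and each is locally Lipschitz on its closed piece, so $u$ is locally Lipschitz on $\R^d$. Moreover $|u|\le R$ everywhere, since $u(x)\in K$, so $u\in L^\infty$. Outside $K$ we have, a.e.,
\begin{equation*}
\nabla u=\frac{1}{\rho}I-\frac{x\otimes\nabla\rho}{\rho^2}.
\end{equation*}
Using $|\nabla\rho|\le 1/r$ together with $\rho\ge |x|/R$, this yields $|\nabla u(x)|\le C(r,R)/|x|$ for $x\notin K$. Since $K\subset B(0,R)$, this gives
\begin{equation*}
\int_{\R^d\setminus K}|\nabla u|^{dp}\,dx\le C\int_{|x|\ge r}|x|^{-dp}\,dx<\infty
\end{equation*}
precisely when $dp>d$, i.e.\ $p>1$. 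Here I take $p>1$ as in Theorem \ref{theorem-1}; for $p=1$ the integral is logarithmically divergent, matching the Remark. Inside $K$ the gradient is the identity, so $\nabla u\in L^{dp}(\R^d)$.

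Finally I would compute the Jacobian. On $\mathrm{int}\,K$ we have $\nabla u=I$, so $\det\nabla u=1$. The boundary $\partial K$ of a convex body is Lebesgue-null, so it can be ignored. On $\R^d\setminus K$ the map $u$ is positively $0$-homogeneous. By Euler's relation at points of differentiability (a.e., by Rademacher), $\nabla u(x)\,x=0$ with $x\neq0$, so $\nabla u(x)$ has a nontrivial kernel and $\det\nabla u(x)=0$. Hence $\det\nabla u=\mathbbm{1}_K$ a.e.

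The only delicate point is justifying the a.e.\ differentiability and the Euler identity for the merely Lipschitz $\rho$. I would handle it by noting that $u$ is locally Lipschitz, so Rademacher applies. At a differentiability point $x$ outside $K$, differentiating $t\mapsto u(tx)=u(x)$ at $t=1$ gives $\nabla u(x)x=0$ directly, without needing any smoothness of $\partial K$.
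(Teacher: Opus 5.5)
Your proposal is correct and follows the paper's proof. Both use the radial retraction $u=x/\mu_K(x)$ off $K$ built from the Minkowski functional, and homogeneity gives $\nabla u(x)\,x=0$, so $\det\nabla u=0$ outside $K$. You also supply steps the paper leaves implicit: translating so that $0\in\mathrm{int}\,K$ (needed for $\mu_K$ to be finite and Lipschitz), the empty-interior case, the bound $|\nabla u|\lesssim|x|^{-1}$ showing $\nabla u\in L^{dp}$ for $p>1$, and the Rademacher/ray argument that justifies Euler's identity for a merely Lipschitz gauge.
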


\begin{proof}
	Define Minkowski functional $\mu_k:\R^d\rr[0,\f)$ as follows
	\begin{equation}
		\mu_K(x)=\inf\left\{t>0;\,t^{-1}x\in K\right\}.
	\end{equation}
	Observe that for any $x\in K$ we have $\mu_K(x)\leq1$ and when $x\in K^c$ it follows $\mu_K(x)>1$. Furthermore, Lipschitz continuity of the map $x\mapsto\mu_K(x)$ follows. We note that $\{x\in\R^d;\,\mu_K(x)\leq1\}=K$. Consider $u:\R^d\rr\R^d$ defined as follows
	\begin{equation}
		u^+(x)=\left\{\begin{array}{cl}
			x&\mbox{ for }\mu_K(x)<1,\\
			\frac{x}{\mu_K(x)}&\mbox{ for }\mu_K(x)>1.
		\end{array}\right.
	\end{equation}
	By a simple calculation, it follows
	\begin{equation}
		\nabla u^+(x)=\left\{\begin{array}{cl}
			I_d&\mbox{ for }\mu_K(x)\leq1,\\
			\frac{1}{\mu_K(x)}I_d-\frac{x\otimes \nabla\mu_K(x)}{\mu^2_K(x)}&\mbox{ for }\mu_K(x)>1.
		\end{array}\right.
	\end{equation}
	To conclude about $\det(\nabla u)$, we next study $\nabla \mu_K$. Note that $\mu_K(sx)=s\mu_K(x)$. We invoke Euler’s Homogeneous Function Theorem to conclude
	\begin{equation*}
		x\cdot \nabla\mu_K(x)=\mu_K(x)
	\end{equation*}
	Then, it follows $[x\otimes \nabla\mu_K(x)]x=(x\cdot \nabla\mu_K(x))x=\mu_K(x)x$. Therefore, $$x\in\mbox{Ker}\left(\mu_K(x)I_d-x\otimes \nabla\mu_K(x)\right.$$ Subsequently,
	\begin{equation}
		\det(\nabla u^+(x))=\left\{\begin{array}{cl}
			1&\mbox{ for }\mu_K(x)<1,\\
			0&\mbox{ for }\mu_K(x)>1.
		\end{array}\right.
	\end{equation}
	This completes the proof of Proposition \ref{prop:discont}.
\end{proof}

\bigskip

\noi\textbf{Acknowledgements:} 
AJ would like to thank Anusandhan National Research Foundation (ANRF) for supporting his position as a Ramanujan fellow at HRI, Prayagraj, India, under project file no. RJF/2025/000642. 

%

\end{document}